\newtheorem{theorem}{Theorem}
\newtheorem{corollary}[theorem]{Corollary}
\newtheorem{definition}[theorem]{Definition}
\newtheorem{lemma}[theorem]{Lemma}
\newenvironment{proof}[1][Proof]{\textbf{#1.} }{\ \rule{0.5em}{0.5em}}
\begin{document}

\title{Interlacement in 4-regular graphs: a new approach using nonsymmetric matrices}
\author{Lorenzo Traldi\\Lafayette College\\Easton, Pennsylvania 18042}
\maketitle
\date{}

\begin{abstract}
Let $F$ be a 4-regular graph with an Euler system $C$. We introduce a simple
way to modify the interlacement matrix of $C$ so that every circuit partition
$P$ of $F$ has an associated modified interlacement matrix $M(C,P)$. If $C$
and $C^{\prime}$ are Euler systems of $F$ then $M(C,C^{\prime})$ and
$M(C^{\prime},C)$ are inverses, and for any circuit partition $P$,
$M(C^{\prime},P)=M(C^{\prime},C)\cdot M(C,P)$. This machinery allows for short
proofs of several results regarding the linear algebra of interlacement.

\bigskip

Keywords. 4-regular graph, circuit partition, Euler system, interlacement

\bigskip

Mathematics Subject\ Classification. 05C31

\end{abstract}

\section{Interlacement and local complements}

A \emph{graph} $G=(V(G),E(G))$ is given by a finite set $V(G)$ of
\emph{vertices}, and a finite set $E(G)$ of \emph{edges}. In a \emph{looped
simple graph} each edge is incident on one or two vertices, and different
edges have different vertex-incidences;\ an edge incident on only one vertex
is a \emph{loop}. A\emph{ simple graph} is a looped simple graph with no loop.
In general, a graph may have \emph{parallel edges} (distinct edges with the
same vertex-incidences). Edge-vertex incidences generate an equivalence
relation on $E(G)\cup V(G)$; the equivalence classes are the \emph{connected
components} of $G$, and the number of connected components is denoted $c(G)$.
Two vertices incident on a non-loop edge are \emph{neighbors}, and if $v\in
V(G)$ then $N(v)=\{$neighbors of $v\}$ is the \emph{open neighborhood} of $v$.

Each edge consists of two distinct \emph{half-edges}, and the edge has two
distinct \emph{directions} given by designating one half-edge as initial and
the other as terminal. Each half-edge is incident on a vertex; if the edge is
not a loop then the half-edges are incident on different vertices. The number
of half-edges incident on a vertex $v$ is the \emph{degree} of $v$, and
a\emph{ }$d$\emph{-regular} graph is one whose vertices all have degree $d$.
In a directed graph each vertex has an \emph{indegree} and an \emph{outdegree}%
; a \emph{d-in, d-out} digraph is one whose vertices all have indegree $d$ and
outdegree $d$. A \emph{circuit} in a graph is a sequence $v_{1}$, $h_{1}$,
$h_{1}^{\prime}$, $v_{2}$, ..., $v_{k}$, $h_{k}$, $h_{k}^{\prime}$,
$v_{k+1}=v_{1}$ such that for each $i$, $h_{i+1}$ and $h_{i}^{\prime}$ are
half-edges incident on $v_{i+1}$, and $h_{i}$ and $h_{i}^{\prime}$ are the
half-edges of an edge $e_{i}$; $e_{i}\neq e_{j}$ when $i\neq j$. A
\emph{directed} circuit in a directed graph is a circuit in which $h_{i}$\ is
the initial half-edge of $e_{i}$, for every $i$. An \emph{Euler circuit} is a
circuit in which every edge appears exactly once; more generally, an
\emph{Euler system} is a collection of Euler circuits, one in each connected
component of the graph. A graph has Euler systems if and only if every vertex
is of even degree; we refer to Fleischner's books \cite{F1, F2} for the
general theory of Eulerian graphs.

In this paper we are concerned with the theory of Euler systems in 4-regular
graphs, introduced by Kotzig \cite{K}. If $v$ is a vertex of a 4-regular graph
$F$ and $C$ is an Euler system of $F$, then the $\kappa$\emph{-transform}
$C\ast v$ is the Euler system obtained from $C$ by reversing one of the two
$v$-to-$v$ walks within the circuit of $C$ incident on~$v$. \textit{Kotzig's
theorem} is that all Euler systems of $F$ can be obtained from any one using
finite sequences of $\kappa$-transformations.

The \textit{interlacement graph} $\mathcal{I}(C)$ of a 4-regular graph $F$
with respect to an Euler system $C$ was introduced by Bouchet \cite{Bold} and
Read and Rosenstiehl \cite{RR}.

\begin{definition}
$\mathcal{I}(C)$ is the simple graph with $V(\mathcal{I}(C))=V(F)$, in which
$v$ and $w$ are adjacent if and only if they appear in the order
$v...w...v...w...$ on one of the circuits of $C$.
\end{definition}

There is a natural way to construct $\mathcal{I}(C\ast v)$ from $\mathcal{I}%
(C)$.

\begin{definition}
\label{lc} Let $G$ be a simple graph, and suppose $v\in V(F)$. The
\emph{simple local complement }$G^{v}$ is the graph obtained from $G$ by
reversing adjacencies between neighbors of $v$.
\end{definition}

That is, if $w\neq x\in V(G)=V(G^{v})$ then $w$ and $x$ are neighbors in
$G^{v}$ if and only if either (a) at least one of them is not a neighbor of
$v$, and they are neighbors in $G$; or (b) both are neighbors of $v$, and they
are not neighbors in $G$. The well-known equality $\mathcal{I}(C\ast
v)=\mathcal{I}(C)^{v}$ follows from the fact that reversing one of the two
$v$-to-$v$ walks within the incident circuit of $C$ has the effect of toggling
adjacencies between vertices that appear once apiece on this walk.

Another way to describe simple local complementation involves the following.

\begin{definition}
The \emph{Boolean adjacency matrix} of a graph $G$ is the symmetric
$V(G)\times V(G)$ matrix $\mathcal{A}(G)$ with entries in $GF(2)$ given by: a
diagonal entry is 1 if and only if the corresponding vertex is looped in $G$,
and an off-diagonal entry is 1 if and only if the corresponding vertices are
neighbors in $G$.
\end{definition}

\begin{definition}
\label{lc2}Suppose $G$ is a simple graph and%
\[
\mathcal{A}(G)=%
\begin{bmatrix}
0 & \mathbf{1} & \mathbf{0}\\
\mathbf{1} & M_{11} & M_{12}\\
\mathbf{0} & M_{21} & M_{22}%
\end{bmatrix}
,
\]
with the first row and column corresponding to $v$. Then $G^{v}$ is the simple
graph whose adjacency matrix is%
\[
\mathcal{A}(G^{v})=%
\begin{bmatrix}
0 & \mathbf{1} & \mathbf{0}\\
\mathbf{1} & \overline{M}_{11}-I & M_{12}\\
\mathbf{0} & M_{21} & M_{22}%
\end{bmatrix}
\]
where $I$ is an identity matrix and the overbar indicates toggling of all entries.
\end{definition}

Kotzig's theorem tells us that the Euler systems of a 4-regular graph $F$ form
an orbit under $\kappa$-transformations. It follows that the interlacement
graphs of Euler systems of $F$ form an orbit under simple local
complementation. From a combinatorial point of view this \textquotedblleft
naturality\textquotedblright\ of interlacement graphs is intuitively
satisfying: the Euler systems of $F$ must share some structural features, as
they coexist in $F$, and these shared structural features are reflected in
shared structural features of their interlacement graphs. Many researchers
have studied simple local complementation in the decades since Kotzig founded
the theory; the associated literature is large and quite fascinating. We do
not presume to summarize this body of work, but we might mention that
intrinsic properties distinguish the simple graphs that arise as interlacement
graphs from those that do not \cite{Bco, F} and that 4-regular graphs with
isomorphic interlacement graphs are closely related to each other \cite{Gh}.

In contrast, the algebraic properties of interlacement graphs are \emph{not}
intuitively satisfying. The adjacency matrices of the various interlacement
graphs\ associated to $F$ have little in common, aside from the fact that they
are symmetric matrices of the same size. To say the same thing in a different
way, simple local complementation changes fundamental algebraic properties of
the adjacency matrix. For instance the ranks of $\mathcal{A}(G)$ and
$\mathcal{A}(G^{v})$ may be quite different; this rank change is caused by the
$-I$ in Definition \ref{lc2}.

The purpose of this paper is to present \emph{modified interlacement
matrices}, whose algebraic properties are in many ways more natural than those
of interlacement matrices. We present the theory of these matrices in\ Section
2, and then briefly summarize the connections between this theory and earlier
work in\ Section 3. Before going into detail we would like to thank R.
Brijder, H. J. Hoogeboom, D. P. Ilyutko, V. O. Manturov and L. Zulli for many
discussions of their work on interlacement, including \cite{BH2}, \cite{IM1}
and \cite{Z}. We are also grateful to an anonymous reader for comments on an
earlier version of the paper.

\section{Modified interlacement and local complements}

Our modifications involve the following notions. If $v$ is a vertex of a
4-regular graph $F$ then Kotzig \cite{K} observed that there are three
\emph{transitions} at $v$, i.e., three different pairings of the four incident
half-edges into disjoint pairs. If $C$ is an Euler system of $F$ then we can
classify these three transitions according to their relationship with $C$, as
in \cite{T5, T6}. One transition appears in $C$; we label this one $\phi$, for
\emph{follow}. Of the other two transitions, one is consistent with an
orientation of the incident circuit of $C$, and the other is not; we label
them $\chi$ and $\psi$, respectively. (It does not matter which orientation of
this circuit of $C$ is used.)

See Figure \ref{lalgint1}, where circuits are indicated with this convention:
when a circuit traverses a vertex, the dash pattern is maintained. In more
complex diagrams like Figure \ref{lalgint3} it is sometimes necessary to
change the dash pattern in the middle of an edge, in order to make sure that
two different dash patterns appear at each vertex.%

\begin{figure}
[ptb]
\begin{center}
\includegraphics[
trim=1.068682in 8.029093in 1.071155in 1.341391in,
height=1.0222in,
width=4.6069in
]%
{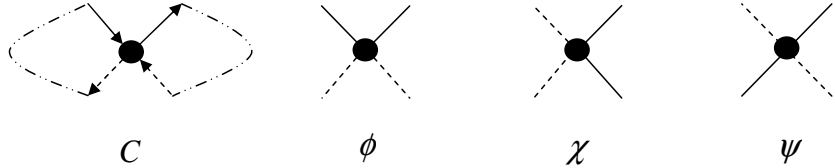}%
\caption{The three transitions at $v$ are labeled according to their
relationship with an Euler system $C$.}%
\label{lalgint1}%
\end{center}
\end{figure}

There are $3^{\left\vert V(G)\right\vert }$ different ways to choose $\phi$,
$\chi$ or $\psi$ at each vertex of $F$. Each system of choices determines a
\textit{circuit partition} or \textit{Eulerian partition }of $F$, i.e., a
partition of $E(F)$ into edge-disjoint circuits. The circuit partitions that
include precisely $c(F)$ circuits are the Euler systems of $F$. Circuit
partitions have received a great deal of attention since they were introduced
by Kotzig \cite{K}, who called them $\xi$\emph{-decompositions}. Building on
earlier work of Martin \cite{Ma}, Las Vergnas \cite{L2, L1, L} introduced the
idea of using the generating function $\sum x^{\left\vert P\right\vert }$ that
records the sizes of the circuit partitions of $F$ as a structural invariant
of $F$. This idea has subsequently appeared in knot theory (where it underlies
the Kauffman bracket \cite{Kau}) and in general graph theory (where it
motivates the interlace polynomials of Arratia, Bollob\'{a}s and Sorkin
\cite{A1, A2, A}).

Here is the central definition of the paper.

\begin{definition}
\label{modint}Let $C$ be an Euler system of a 4-regular graph $F$, and let $P$
be a circuit partition of $F$. Then the \emph{modified interlacement matrix of
}$C$\emph{ with respect to }$P$ is the matrix $M(C,P)$ obtained from
$\mathcal{A}(\mathcal{I}(C))$ by making the following changes:

\begin{enumerate}
\item If $P$ involves the $\phi$ transition with respect to $C$ at a vertex
$v$, then change the diagonal entry corresponding to $v$ to 1 and change every
other entry in that column to 0.

\item If $P$ involves the $\psi$ transition with respect to $C$ at a vertex
$v$, then change the diagonal entry corresponding to $v$ to 1.
\end{enumerate}
\end{definition}

Definition \ref{modint} might seem complicated and unmotivated, but it has the
surprising virtue that the modified interlacement matrices of different\ Euler
systems with respect to a given circuit partition are related to each other
through elementary row operations. Consequently these modified interlacement
matrices share many algebraic properties -- for instance, they all have the
same rank and the same right nullspace -- and familiar ideas of elementary
linear algebra can be used to explain these properties.

\begin{definition}
\label{lc3} Let $G$ be a graph, and let $M$ be a matrix whose rows and columns
are indexed by $V(G)$. Suppose $v\in V(G)$ and $M$ is
\[
M=%
\begin{bmatrix}
d_{vv} & \rho_{1} & \rho_{2}\\
\kappa_{1} & M_{11} & M_{12}\\
\kappa_{2} & M_{21} & M_{22}%
\end{bmatrix}
,
\]
where the first row and column correspond to $v$, the rows and columns of
$M_{11}$ correspond to vertices in $N(v)$, and the rows and columns of
$M_{22}$ correspond to vertices in $V(G)-N(v)-\{v\}$. Then the \emph{modified
local complement} of $M$ with respect to $v$ is the matrix obtained from $M$
by adding the $v$ row to every row corresponding to a neighbor of $v$:%
\[
M_{\operatorname{mod}}^{v}=%
\begin{bmatrix}
d_{vv} & \rho_{1} & \rho_{2}\\
\kappa_{1}^{\prime} & M_{11}^{\prime} & M_{12}^{\prime}\\
\kappa_{2} & M_{21} & M_{22}%
\end{bmatrix}
.
\]

\end{definition}%

\begin{figure}
[p]
\begin{center}
\includegraphics[
trim=1.068682in 6.289777in 1.207214in 1.343530in,
height=2.3263in,
width=4.5057in
]%
{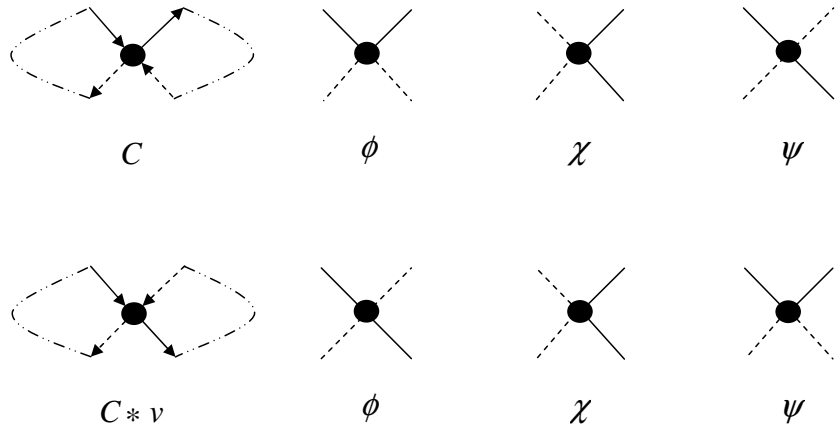}%
\caption{When $C$ is replaced with $C\ast v$, the $\phi$ and $\psi$ transition
labels are interchanged at $v$.}%
\label{lalgint2}%
\end{center}
\end{figure}
\begin{figure}
[p]
\begin{center}
\includegraphics[
trim=1.062085in 5.752793in 1.074454in 0.939188in,
height=3.0329in,
width=4.6095in
]%
{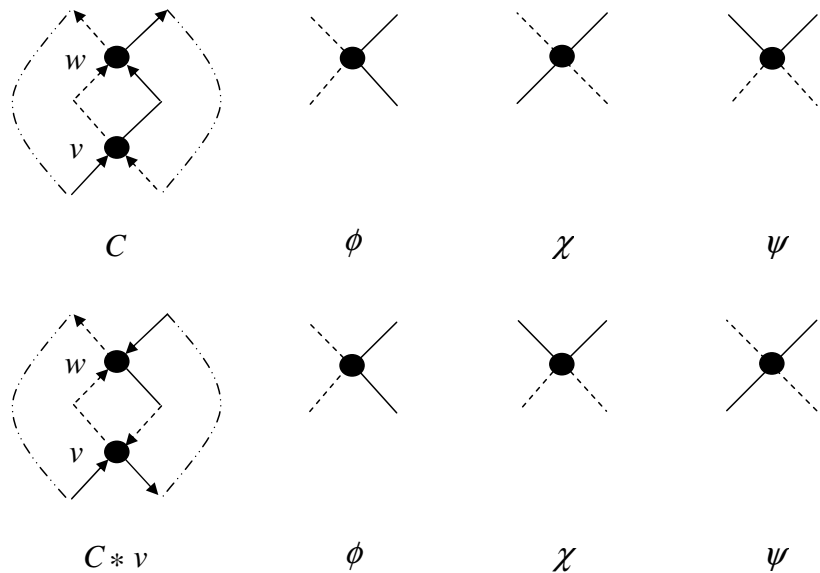}%
\caption{When $C$ is replaced with $C\ast v$, the $\chi$ and $\psi$ transition
labels are interchanged at vertices interlaced with $v$.}%
\label{lalgint3}%
\end{center}
\end{figure}

\begin{theorem}
\label{thmone}Let $C$ be an Euler system of a 4-regular graph $F$, let $P$ be
a circuit partition of $F$, and let $v\in V(F)$. Consider $M(C,P)$ to have row
and column indices from $V(\mathcal{I}(C))$. Then%
\[
M(C,P)_{\operatorname{mod}}^{v}=M(C\ast v,P).
\]

\end{theorem}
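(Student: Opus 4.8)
The plan is to prove the identity one column at a time, after observing that the changes in Definition \ref{modint} are really \emph{column} operations. Indeed, modification~1 at a vertex $u$ replaces the $u$-th column by the standard basis vector $e_u$ (it puts $1$ on the diagonal and kills the rest of that column); modification~2 at $u$ adds $e_u$ to the $u$-th column (it flips the diagonal entry, which is $0$ because $\mathcal{I}(C)$ is loopless, to $1$); and at most one modification is performed at each vertex, affecting only that vertex's column. Writing $a_u$ for the $u$-th column of $\mathcal{A}(\mathcal{I}(C))$ — so that $a_v$ is the characteristic vector of $N(v)$ and $(a_u)_u=0$ for all $u$ — the $u$-th column of $M(C,P)$ is therefore $e_u$, $a_u+e_u$, or $a_u$ according as $P$ uses the $\phi$, $\psi$, or $\chi$ transition at $u$ with respect to $C$.

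Next I would record two further facts. First, the transition labels: by Figures~\ref{lalgint2} and \ref{lalgint3}, passing from $C$ to $C\ast v$ interchanges $\phi$ and $\psi$ at $v$, interchanges $\chi$ and $\psi$ at every vertex of $N(v)$, and fixes all other transition labels. Second, the adjacency matrix: since $\mathcal{I}(C\ast v)=\mathcal{I}(C)^v$, Definition \ref{lc2} — in which $\overline{M}_{11}-I$ is just $M_{11}$ with its off-diagonal entries flipped, because $-I=I$ over $GF(2)$ and $M_{11}$ already has zero diagonal — shows that the $u$-th column $b_u$ of $\mathcal{A}(\mathcal{I}(C\ast v))$ equals $a_u$ when $u\notin N(v)$, and differs from $a_u$ exactly in the positions of $N(v)\setminus\{u\}$ when $u\in N(v)$; in the latter case $b_u=a_u+a_v+e_u$ over $GF(2)$.

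Then I would rephrase Definition \ref{lc3} in the same language: adding the $v$-th row of $M(C,P)$ to every row indexed by a neighbor of $v$ replaces each column $m$ by $m+m_v\,a_v$, where $m_v$ is its $v$-th entry. With the facts above in hand, $M(C,P)_{\operatorname{mod}}^v=M(C\ast v,P)$ becomes a column-by-column check: for each vertex $u$, run through the three possibilities for its $C$-transition and the three cases $u=v$, $u\in N(v)$, and $u\notin N(v)\cup\{v\}$; in each of these nine cases one reads off the column on each side and verifies equality by a one-line computation over $GF(2)$. For instance, if $u\in N(v)$ and $P$ uses $\chi$ at $u$ with respect to $C$, then the $M(C,P)$-column is $m=a_u$ with $m_v=(a_u)_v=1$, so the modified-local-complement column is $a_u+a_v$; and $P$ uses $\psi$ at $u$ with respect to $C\ast v$, so the $M(C\ast v,P)$-column is $b_u+e_u=(a_u+a_v+e_u)+e_u=a_u+a_v$, in agreement. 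The other eight cases are entirely similar.

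The main obstacle is not the linear algebra, which is routine bookkeeping over $GF(2)$, but getting the second fact exactly right — which transition labels are interchanged, and at which vertices, when $C$ is replaced by $C\ast v$ — since any slip there propagates into the column comparison; this is precisely what Figures~\ref{lalgint2} and \ref{lalgint3} are designed to establish.
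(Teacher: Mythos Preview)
Your proposal is correct and follows essentially the same column-by-column strategy as the paper's proof: both split into the cases $u=v$, $u\in N(v)$, and $u\notin N(v)\cup\{v\}$, invoke the transition-label swaps of Figures~\ref{lalgint2} and~\ref{lalgint3}, and compare columns. Your version packages the bookkeeping more algebraically---writing the columns as $e_u$, $a_u$, or $a_u+e_u$, deriving $b_u=a_u+a_v+e_u$ for $u\in N(v)$ from Definition~\ref{lc2}, and recasting the modified local complement as $m\mapsto m+m_v\,a_v$---which makes the nine checks mechanical, whereas the paper argues each case more verbally; but the underlying argument is the same.
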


\begin{proof}
Let $\vec{v}\in GF(2)^{V(F)}$ be the vector whose only nonzero coordinate
corresponds to $v$, and let $\vec{N}(v)\in GF(2)^{V(F)}$ be the vector whose
$w$ coordinate is 1 if and only if $w$ neighbors $v$ in $\mathcal{I}(C)$ and
$\mathcal{I}(C\ast v)$.

We first verify that $M(C\ast v,P)$ and $M(C,P)_{\operatorname{mod}}^{v}$ have
the same $v$ column. As illustrated in Figure \ref{lalgint2}, if $P$ involves
the $\phi$ (resp. $\chi$) (resp. $\psi$) transition at $v$ with respect to
$C$, then $P$ involves the $\psi$ (resp. $\chi$) (resp. $\phi$) transition
with respect to $C\ast v$. If $P$ involves the $\phi$ transition at $v$ with
respect to $C$, then according to Definition \ref{modint} the $v$ column of
$M(C,P)$ is $\vec{v}$ and the $v$ column of $M(C,P)_{\operatorname{mod}}^{v}$
is $\vec{v}+\vec{N}(v)$. As $P$ involves the $\psi$ transition at $v$ with
respect to $C\ast v$, the $v$ column of $M(C\ast v,P)$ is also $\vec{v}%
+\vec{N}(v)$. If $v$ involves the $\chi$ transition at $v$ with respect to
$C$, then $M(C,P)$ and $M(C,P)_{\operatorname{mod}}^{v}$ have the same $v$
column, namely $\vec{N}(v)$. This is also the $v$ column of $M(C\ast v,P)$,
because $P$ involves the $\chi$ transition at $v$ with respect to $C\ast v$.
If $P$ involves the $\psi$ transition at $v$ with respect to $C$, then
according to Definition \ref{modint} the $v$ column of $M(C,P)$ is $\vec
{v}+\vec{N}(v)$, so the $v$ column of $M(C,P)_{\operatorname{mod}}^{v}$ is
$\vec{v}$. As $P$ involves the $\phi$ transition at $v$ with respect to $C\ast
v$, the $v$ column of $M(C\ast v,P)$ is also $\vec{v}$.

Now consider one of the columns involved in $M_{11}$. This column corresponds
to a vertex $w$ that neighbors $v$ in $\mathcal{I}(C)$ and $\mathcal{I}(C\ast
v)$. As indicated in Figure \ref{lalgint3}, if $P$ involves the $\phi$ (resp.
$\chi$) (resp. $\psi$) transition at $w$ with respect to $C$, then $P$
involves the $\phi$ (resp. $\psi$) (resp. $\chi$) transition at $w$ with
respect to $C\ast v$. If $P$ involves the $\phi$ transition then $M(C,P)$,
$M(C,P)_{\operatorname{mod}}^{v}$ and $M(C\ast v,P)$ all have the same $w$
column, namely $\vec{w}$. Otherwise, the difference between the $w$ column of
$M(C,P)$ and the $w$ column of $M(C,P)_{\operatorname{mod}}^{v}$ is simply
that the diagonal entry is toggled; according to Definition \ref{modint}, this
is the same as the difference between the $w$ column of $M(C,P)$ and the $w$
column of $M(C\ast v,P)$.

Finally, consider one of the columns involved in $M_{12}$. This column
corresponds to a vertex $w$ that does not neighbor $v$ in $\mathcal{I}(C)$ and
$\mathcal{I}(C\ast v)$. It follows that $M(C,P)$, $M(C\ast v,P)$ and
$M(C,P)_{\operatorname{mod}}^{v}$ all have the same $w$ column.
\end{proof}

\begin{corollary}
\label{bigcor}Suppose $C$ and $C^{\prime}$ are two Euler systems of $F$. Then
$M(C^{\prime},C)$ is nonsingular and for every circuit partition $P$,
\[
M(C^{\prime},P)=M(C^{\prime},C)\cdot M(C,P).
\]

\end{corollary}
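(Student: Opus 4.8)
The plan is to lift the single-$\kappa$-transformation identity of Theorem \ref{thmone} to arbitrary pairs of Euler systems using Kotzig's theorem, and then to identify the resulting multiplier by evaluating at the circuit partition $P=C$. The first observation is that forming $M^v_{\operatorname{mod}}$ in Definition \ref{lc3} is just a batch of elementary row operations — add the $v$ row to each row indexed by a neighbor of $v$ — so it equals left multiplication by an invertible matrix. Concretely, let $E(C,v)$ be the $V(F)\times V(F)$ matrix over $GF(2)$ with $1$s on the diagonal and an extra $1$ in position $(w,v)$ for each neighbor $w$ of $v$ in $\mathcal{I}(C)$; then $M(C,P)^v_{\operatorname{mod}}=E(C,v)\cdot M(C,P)$, and $E(C,v)$ is invertible because $v\notin N(v)$ forces $E(C,v)^2=I$. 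Crucially $E(C,v)$ depends only on $v$ and $\mathcal{I}(C)$, not on $P$, so Theorem \ref{thmone} says $M(C\ast v,P)=E(C,v)\cdot M(C,P)$.

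Next I would invoke Kotzig's theorem to choose a chain of Euler systems $C=C_0,C_1,\dots,C_k=C'$ with $C_i=C_{i-1}\ast v_i$. Iterating the displayed identity along the chain gives $M(C',P)=E(C_{k-1},v_k)\cdots E(C_0,v_1)\cdot M(C,P)$. Setting $Q=E(C_{k-1},v_k)\cdots E(C_0,v_1)$, we obtain $M(C',P)=Q\cdot M(C,P)$ for every circuit partition $P$, where $Q$ is a product of invertible matrices, hence invertible, and — this is the point — $Q$ does not depend on $P$.

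Finally I would specialize to $P=C$. Since the $\phi$ transition at a vertex is by definition the transition occurring in $C$, the circuit partition that is $C$ itself uses $\phi$ at every vertex, so clause (1) of Definition \ref{modint} applies at every vertex and $M(C,C)$ is the matrix all of whose columns are the corresponding standard basis vectors; that is, $M(C,C)=I$. Substituting $P=C$ into $M(C',P)=Q\cdot M(C,P)$ yields $M(C',C)=Q\cdot I=Q$. Hence $M(C',C)=Q$ is nonsingular, and $M(C',P)=Q\cdot M(C,P)=M(C',C)\cdot M(C,P)$ for every $P$, which is the assertion.

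Granting Theorem \ref{thmone}, I expect no real obstacle: the argument is formal. The only points requiring care are that all the matrices $E(C_{i-1},v_i)$ be read as acting on the one fixed index set $V(F)$ — legitimate, since every $\mathcal{I}(C_i)$ has vertex set $V(F)$ — and that each $E(C_{i-1},v_i)$ be recognized as independent of $P$, which is what makes $Q$ a single $P$-free matrix and is clear from Definition \ref{lc3}. The only genuine idea beyond bookkeeping is the evaluation at $P=C$ and the observation that $M(C,C)=I$.
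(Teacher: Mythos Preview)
Your proof is correct and is essentially the same argument as the paper's: both use Kotzig's theorem to write $C'$ as a sequence of $\kappa$-transforms of $C$, invoke Theorem~\ref{thmone} to realize each step as left multiplication by an invertible elementary matrix independent of $P$, and then identify the resulting product with $M(C',C)$ via the observation $M(C,C)=I$. The only cosmetic difference is that the paper packages the simultaneous tracking of $M(C,C)\mapsto M(C',C)$ and $M(C,P)\mapsto M(C',P)$ into a single double matrix $[\,I\mid M(C,P)\,]$, whereas you name the product $Q$ first and then specialize $P=C$ afterward.
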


\begin{proof}
Consider the double matrix%
\[%
\begin{bmatrix}
I & M(C,P)
\end{bmatrix}
\]
where $I=M(C,C)$ is the identity matrix. According to Kotzig's theorem, it is
possible to obtain $C^{\prime}$ from $C$ using a finite sequence of $\kappa
$-transformations. Theorem \ref{thmone} tells us that after applying the
corresponding sequence of modified local complementations we will have
obtained the double matrix%
\[%
\begin{bmatrix}
M(C^{\prime},C) & M(C^{\prime},P)
\end{bmatrix}
.
\]
If $E$ is the product of the elementary matrices corresponding to the row
operations involved in the modified local complementations, then $M(C^{\prime
},C)=E\cdot I$ and $M(C^{\prime},P)=E\cdot M(C,P)$.
\end{proof}

We refer to the formula $M(C^{\prime},P)=M(C^{\prime},C)\cdot M(C,P)$ as
\emph{naturality} of modified interlacement matrices.

\begin{corollary}
\label{invcor}If $C$ and $C^{\prime}$ are Euler systems of $F$ then
\[
M(C,C^{\prime})=M(C^{\prime},C)^{-1}.
\]

\end{corollary}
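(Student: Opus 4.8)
The plan is to deduce this immediately from the naturality formula of Corollary \ref{bigcor}, exploiting the fact that an Euler system is a special case of a circuit partition. First I would apply Corollary \ref{bigcor} with $P=C^{\prime}$, which is legitimate since $C^{\prime}$ is in particular a circuit partition of $F$. This yields
\[
M(C^{\prime},C^{\prime})=M(C^{\prime},C)\cdot M(C,C^{\prime}).
\]

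Next I would identify the left-hand side as the identity matrix. When the circuit partition $P$ is the Euler system $C^{\prime}$ itself, the transition of $P$ at every vertex $v$ is the follow transition $\phi$ with respect to $C^{\prime}$; hence clause 1 of Definition \ref{modint} applies at every vertex, replacing each column of $\mathcal{A}(\mathcal{I}(C^{\prime}))$ by the corresponding standard basis vector. Therefore $M(C^{\prime},C^{\prime})=I$. This is exactly the observation already invoked in the proof of Corollary \ref{bigcor}, where $M(C,C)=I$.

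Combining the two observations gives $M(C^{\prime},C)\cdot M(C,C^{\prime})=I$. Since Corollary \ref{bigcor} also asserts that $M(C^{\prime},C)$ is nonsingular, multiplying on the left by $M(C^{\prime},C)^{-1}$ yields $M(C,C^{\prime})=M(C^{\prime},C)^{-1}$, as claimed. If a two-sided verification is desired, one can interchange the roles of $C$ and $C^{\prime}$ in the argument to obtain $M(C,C^{\prime})\cdot M(C^{\prime},C)=M(C,C)=I$ as well.

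There is essentially no serious obstacle here; the proof is a two-line consequence of the machinery already built. The only points demanding care are bookkeeping ones: making sure the factor order in the naturality formula is the one that produces the inverse relation rather than its transpose, and confirming that specializing the circuit partition to an Euler system genuinely gives the identity matrix rather than merely some other invertible matrix such as a permutation matrix.
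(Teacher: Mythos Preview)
Your argument is correct and is exactly the intended one: the paper states Corollary~\ref{invcor} without proof, leaving it as the obvious specialization $P=C^{\prime}$ in the naturality formula of Corollary~\ref{bigcor} together with the identification $M(C^{\prime},C^{\prime})=I$ already noted there. There is nothing to add.
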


\ It follows from Corollary \ref{bigcor} that all the modified interlacement
matrices of a circuit partition $P$ have the same right nullspace, i.e., the
space
\[
\ker M(C,P)=\{n\in GF(2)^{V(F)}\mid M(C,P)\cdot n=\mathbf{0}\}
\]
does not vary with $C$. As we will see in\ Theorem \ref{coreker}, $\ker
M(C,P)$ coincides with the \emph{core space} of $P$, defined as follows by
Jaeger \cite{J1}.

\begin{definition}
If $\gamma$ is a circuit of $F$ then the \emph{core vector} $core(\gamma)$ is
the element of $GF(2)^{V(F)}$ whose $v$ coordinate is 1 if and only if
$\gamma$ is singly incident at $v$, i.e., $\gamma$ includes precisely two of
the four half-edges incident at $v$. The \emph{core space }$core(P)$ is the
subspace of $GF(2)^{V(F)}$ spanned by the core vectors of circuits of $P$.
\end{definition}

Observe that $core(\gamma)=\mathbf{0}$ if and only if $\gamma$ is an Euler
circuit of a connected component of $F$.

%

\begin{figure}
[ptb]
\begin{center}
\includegraphics[
trim=2.673353in 8.571426in 1.340799in 1.340322in,
height=0.6175in,
width=3.2024in
]%
{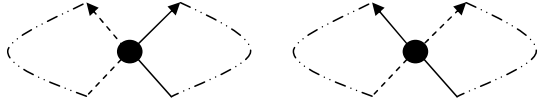}%
\caption{Two circuits are united, respecting orientations.}%
\label{lalgint4}%
\end{center}
\end{figure}

Here is a useful construction.

\begin{lemma}
Let $P$ be a circuit partition of a 4-regular graph $F$, which is not an Euler
system. Then there is an\ Euler system $C$ of $F$ with the following properties:

\begin{enumerate}
\item $P$ involves only $\phi$ and $\chi$ transitions with respect to $C$.

\item There is a circuit $\gamma_{0}\in P$ and a vertex $v_{0}$ incident on
$\gamma_{0}$, such that $P$ involves the $\chi$ transition with respect to $C$
at $v_{0}$, and $P$ involves the $\phi$ transition with respect to $C$ at
every other vertex incident on $\gamma_{0}$.

\item The core vector $core(\gamma_{0})$ is $\vec{v}_{0}+\vec{N}(v_{0})$,
where $\vec{v}_{0}\in GF(2)^{V(F)}$ is the vector whose only nonzero
coordinate corresponds to $v_{0}$, and $\vec{N}(v_{0})\in GF(2)^{V(F)}$ is the
vector whose $w$ coordinate is 1 if and only if $w$ neighbors $v_{0}$ in
$\mathcal{I}(C)$.
\end{enumerate}
\end{lemma}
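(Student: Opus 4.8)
The plan is to build $C$ from $P$ by repeatedly \emph{merging} circuits of $P$ that share a vertex, always in the unique way that respects a fixed orientation, and to control the order of the merges by a spanning tree so that one circuit of $P$ survives almost untouched. First I would orient each circuit of $P$, making $F$ into a digraph with every indegree and outdegree equal to $2$. Note that every vertex of $F$ is incident on at most two circuits of $P$, since a circuit uses an even number (hence $2$ or $4$) of the four half-edges at any vertex it visits. At a vertex $v$ where two \emph{distinct} circuits of $P$ meet (each using exactly two half-edges, one in and one out) there is exactly one transition other than the one $P$ uses that again pairs incoming with outgoing half-edges, and switching to it merges the two circuits into one; I will only ever perform switches of this kind, so no vertex is switched twice. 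Since $P$ is not an Euler system, some connected component $K$ carries circuits $\delta_1,\ldots,\delta_m$ of $P$ with $m\ge 2$. Let $H$ be the graph on $\{\delta_1,\ldots,\delta_m\}$ with an edge $\delta_i\delta_j$ whenever these two circuits meet at a vertex of $F$; connectedness of $K$ forces $H$ to be connected, because a proper nonempty union of the $\delta_i$ meeting no other $\delta_j$ would be a union of components of $K$. Choose a spanning tree $T$ of $H$ and, for each edge $\delta_i\delta_j$ of $T$, a vertex $v_{ij}$ at which $\delta_i$ and $\delta_j$ meet; each such $v_{ij}$ is incident only on $\delta_i$ and $\delta_j$, so the $v_{ij}$ are pairwise distinct. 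Processing the edges of $T$ from the leaves inward and performing the switch at each $v_{ij}$ turns $\{\delta_1,\ldots,\delta_m\}$ into a single directed Euler circuit of $K$: at each step the two circuits being joined are genuinely distinct, since deleting a leaf edge of the current tree separates them. Doing the same in every other component (and letting $C$ agree with $P$ on components where $P$ is already one circuit) produces an Euler system $C$ of $F$.

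By construction $C$ differs from $P$ only at the vertices $v_{ij}$ and their analogues in the other components, and at each of these the transitions of $C$ and $P$ are the two orientation-consistent transitions; hence $P$ involves only $\phi$ and $\chi$ with respect to $C$, which is property 1. For property 2, choose $\delta_1$ to be a leaf of $T$ — possible since $m\ge 2$ — set $\gamma_0=\delta_1$, and let $v_0$ be its unique tree-edge vertex. The $\chi$-vertices of $P$ with respect to $C$ inside $K$ are exactly the $v_{ij}$, and since a vertex $v_{ij}$ with $1\notin\{i,j\}$ is incident only on $\delta_i$ and $\delta_j$, it is not on $\gamma_0$; so the only $\chi$-vertex lying on $\gamma_0$ is $v_0$, and $P$ involves $\phi$ at every other vertex of $\gamma_0$.

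For property 3 I would observe that switching the transition of $C$ at $v_0$ to the $\chi$-transition is the same as restoring $P$'s transition there, which undoes the last merge along $T$ involving $\delta_1$; the $K$-part of the resulting circuit partition is $\{\delta_1,\epsilon\}$, where $\epsilon$ is the merge of $\delta_2,\ldots,\delta_m$ along $T-\delta_1$. Thus $\delta_1$ is one of the two circuits into which the $\chi$-transition at $v_0$ splits the Euler circuit of $C$ through $K$. Writing that Euler circuit cyclically as $v_0\,A\,v_0\,B$, the circuit $\gamma_0$ is, after possibly swapping the roles of $A$ and $B$, the closed walk $v_0\,A\,v_0$: it is singly incident at $v_0$, and for $w\ne v_0$ it is singly incident at $w$ exactly when the two occurrences of $w$ on $C$ are separated by the two occurrences of $v_0$, that is, exactly when $w$ is interlaced with $v_0$ on $C$. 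This is precisely the assertion $core(\gamma_0)=\vec v_0+\vec N(v_0)$. I expect the main obstacle to be this last splitting step: verifying cleanly that an orientation-consistent change of a single transition on an Euler circuit yields exactly two circuits, and that their core vectors are read off from interlacement with $v_0$. The bookkeeping that $C$ agrees with $P$ away from the chosen vertices $v_{ij}$ is routine but must be tracked carefully through the sequence of merges.
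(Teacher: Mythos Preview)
Your argument is correct and follows essentially the same approach as the paper: orient the circuits of $P$ and merge them at shared vertices using the unique orientation-consistent switch until an Euler system $C$ is obtained, so that $P$ uses only $\phi$ and $\chi$ relative to $C$, and take $\gamma_{0}$ to be a circuit of $P$ involved in exactly one merge. The paper organizes the merges greedily (always absorbing an untouched circuit of $P$) while you make the underlying spanning tree of the circuit-adjacency graph explicit and take $\gamma_{0}$ to be a leaf; the identification $core(\gamma_{0})=\vec v_{0}+\vec N(v_{0})$ via the two $v_{0}$-to-$v_{0}$ walks is the same in both.
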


\begin{proof}
We build an\ Euler system $C$ from $P$ as follows. For each circuit of $P$,
arbitrarily choose a preferred orientation. Find a vertex where two distinct
circuits of $P$ are incident, and let $P^{\prime}$ be the circuit partition
obtained by uniting the two incident circuits into one circuit, as indicated
in Figure \ref{lalgint4}. If $P^{\prime}$ is not an Euler system, find some
other vertex at which two distinct circuits of $P^{\prime}$ are incident, with
one of the two being a circuit of $P$; then unite them into one circuit as in
Figure \ref{lalgint4}. Repeat this process $\left\vert P\right\vert -c(F)$
times, at each step uniting two distinct circuits at least one of which is an
element of $P$. The process must end with an\ Euler system $C$. Observe that
at every vertex where two circuits are united during the construction, $P$
involves the $\chi$ transition with respect to $C$; at every other vertex, $P$
involves the $\phi$ transition with respect to $C.$

Let $v_{0}$ be the vertex at which two circuits are united in the last step of
the construction. Suppose that in the last step, a circuit $\gamma_{0}\in P$
is united with some other circuit at $v_{0}$. As $\gamma_{0}\in P$,
$\gamma_{0}$ must not have been involved in any earlier step. Consequently,
every vertex of $\gamma_{0}$ other than $v_{0}$ is a vertex where $P$ involves
the $\phi$ transition with respect to $C$. Also, one of the $v_{0}$-to-$v_{0}$
walks within the incident circuit of $C$ simply follows $\gamma_{0}$, so a
vertex $w\neq v_{0}$ neighbors $v_{0}$ in $\mathcal{I}(C)$ if and only if $w$
appears precisely once on $\gamma_{0}$.
\end{proof}

\begin{theorem}
\label{coreker}Let $P$ be a circuit partition of a 4-regular graph $F$, and
let $C$ be an Euler system of $F$. Then
\[
core(P)=\ker M(C,P).
\]

\end{theorem}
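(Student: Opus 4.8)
The plan is to prove the two inclusions $core(P)\subseteq\ker M(C,P)$ and $\ker M(C,P)\subseteq core(P)$ separately, and in each direction to exploit naturality (Corollary \ref{bigcor}) so that one may choose the most convenient Euler system $C$. Since both $core(P)$ and $\ker M(C,P)$ are independent of the choice of $C$ — the former by definition, the latter because $M(C',P)=M(C',C)\cdot M(C,P)$ with $M(C',C)$ nonsingular — it suffices to establish each inclusion for one well-chosen $C$. The natural candidate is the Euler system $C$ produced by the preceding Lemma, for which $P$ involves only $\phi$ and $\chi$ transitions; for such a $C$, Definition \ref{modint} shows that $M(C,P)$ is obtained from $\mathcal{A}(\mathcal{I}(C))$ by replacing, for each vertex $v$ at which $P$ uses $\phi$, the $v$-column by $\vec v$ (leaving the diagonal untouched at the $\chi$-vertices). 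So $M(C,P)$ has a very transparent columnwise description, which should make both directions essentially computational.

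For the inclusion $core(P)\subseteq\ker M(C,P)$: I would first reduce to showing that a single core vector $core(\gamma)$ lies in $\ker M(C,P)$ for each $\gamma\in P$, since these span $core(P)$ and $\ker M(C,P)$ is a subspace. The Lemma handles the ``last'' circuit $\gamma_0$ beautifully: it tells us $core(\gamma_0)=\vec v_0+\vec N(v_0)$ where $v_0$ is the unique $\chi$-vertex on $\gamma_0$, so I would compute $M(C,P)\cdot(\vec v_0+\vec N(v_0))$ directly, using that $M(C,P)(\vec v_0+\vec N(v_0))$ equals the sum of the $v_0$-column and the $N(v_0)$-columns of $M(C,P)$. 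The $\phi$-columns among $N(v_0)$ contribute the corresponding $\vec w$'s, the $v_0$-column contributes $\vec N(v_0)$ (the unmodified adjacency column, since $v_0$ is a $\chi$-vertex), and the point is that these cancel; one just has to check the diagonal bookkeeping. To handle a general $\gamma\in P$, rather than a bespoke argument I would instead induct on $|P|-c(F)$, or better, observe that for each $\gamma\in P$ one can rerun the Lemma's construction so that $\gamma$ plays the role of $\gamma_0$ — i.e. unite circuits in an order that saves $\gamma$ for last — obtaining an Euler system $C_\gamma$ with $core(\gamma)\in\ker M(C_\gamma,P)=\ker M(C,P)$. That reduces everything to the single computation above.

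For the reverse inclusion $\ker M(C,P)\subseteq core(P)$, a clean route is a dimension count: it suffices to show $\dim\ker M(C,P)\le\dim core(P)$, since we will already have the opposite inequality from the first inclusion, forcing equality of the subspaces. Equivalently, $\operatorname{rank} M(C,P)\ge |V(F)|-\dim core(P)$. Again choosing the Lemma's $C$, the matrix $M(C,P)$ has, for each of the (many) $\phi$-vertices $v$, the column $\vec v$ — a standard basis vector — so those columns are automatically independent and account for a rank of at least the number of $\phi$-vertices; the remaining rank must come from the $\chi$-columns, which are genuine adjacency columns of $\mathcal{I}(C)$. I expect this rank estimate to be the main obstacle: one has to relate the number of $\chi$-vertices and the rank of the submatrix they span to $\dim core(P)$, and the cleanest way may be to identify, via the Lemma applied repeatedly, a spanning set of $core(P)$ of size exactly (number of circuits with nonzero core vector) and match it against a corresponding set of independent columns. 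Alternatively one can argue directly that $M(C,P)\cdot n=\mathbf 0$ forces $n$ to be a $GF(2)$-combination of the vectors $\vec v_0+\vec N(v_0)$ arising from the construction, each of which is a core vector. If the dimension count proves stubborn, the fallback is to prove the reverse inclusion by downward induction on $|P|-c(F)$, peeling off $\gamma_0$: splitting the $\chi$-transition at $v_0$ replaces $P$ by a circuit partition $P'$ with one more circuit, relates $M(C,P)$ to $M(C,P')$ by a single column change, and lets the inductive hypothesis $\ker M(C,P')=core(P')$ carry the argument, the base case being $P$ an Euler system where $M(C,P)=I$ and $core(P)=0$.
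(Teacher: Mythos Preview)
Your proposal identifies the right ingredients --- naturality to make $\ker M(C,P)$ independent of $C$, the special Euler system from the Lemma, and induction on $|P|-c(F)$ --- and your fallback inductive route is essentially the paper's argument. Two points deserve flagging.

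First, your preferred shortcut for $core(P)\subseteq\ker M(C,P)$ has a genuine gap. The claim that ``for each $\gamma\in P$ one can rerun the Lemma's construction so that $\gamma$ plays the role of $\gamma_0$'' fails in general: saving $\gamma$ for last requires that the remaining circuits of $P$ in its component can be merged without touching $\gamma$, i.e.\ that removing $\gamma$ from the circuit-intersection graph leaves it connected. A small example shows this can fail. Take $V(F)=\{a,b,c\}$ with one loop at $a$, one loop at $c$, two parallel $ab$ edges, and two parallel $bc$ edges; with $P=\{\alpha,\beta,\gamma\}$ where $\alpha$ is the loop at $a$, $\beta$ the loop at $c$, and $\gamma$ the length-four circuit $a\,b\,c\,b\,a$, the circuits $\alpha$ and $\beta$ share no vertex, so $\gamma$ cannot be held back to the final step. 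Your inductive alternative for this inclusion is fine, and is what the paper uses.

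Second, your fallback induction is stated with the wrong orientation: changing the transition at $v_0$ from $\chi$ to $\phi$ \emph{unites} $\gamma_0$ with its neighbour $\gamma_1$, so $|P'|=|P|-1$ (not $|P|+1$), and the induction is the ordinary one on $|P|$ with base case $|P|=c(F)$. With that corrected, this is exactly the paper's proof, which moreover handles both inclusions in a single stroke rather than separately: one checks $\ker M(C,P')\subseteq\ker M(C,P)$, that $core(\gamma_0)$ lies in $\ker M(C,P)$ but not in $\ker M(C,P')$, and that the two ranks differ by at most one (the matrices differ in a single column), whence $\ker M(C,P)=\ker M(C,P')+[core(\gamma_0)]=core(P')+[core(\gamma_0)]=core(P)$ by the inductive hypothesis. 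This sidesteps your dimension-count route entirely.
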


\begin{proof}
As $\ker M(C,P)$ does not vary with $C$, we need only prove that the theorem
holds for one choice of $C$. If $P$ itself is an Euler system, then the
theorem holds because $M(P,P)$ is the identity matrix and every core vector of
a circuit of $P$ is $\mathbf{0}$.

The proof proceeds by induction on $\left\vert P\right\vert >c(F)$. Let $C$,
$v_{0}$ and $\gamma_{0}$ be as in the lemma. Then the $v_{0}$ row of $M(C,P)$
is $\mathbf{0}$, and the $v_{0}$ column of $M(C,P)$ is $\vec{N}(v_{0})$.

Let $\gamma_{1}$ be the other circuit of $P$ incident at $v_{0}$, and let
$P^{\prime}$ be the circuit partition obtained from $P$ by uniting $\gamma
_{0}$ and $\gamma_{1}$ at $v_{0}$ as indicated in Figure \ref{lalgint4}. The
only difference between the transitions that appear in $P$ and the transitions
that appear in $P^{\prime}$ occurs at $v_{0}$, where $P$ involves the $\chi$
transition with respect to $C$ and $P^{\prime}$ involves the $\phi$ transition
with respect to $C$; hence the only difference between $M(C,P)$ and
$M(C,P^{\prime})$ is that the $v_{0}$ column of $M(C,P^{\prime})$ is $\vec
{v}_{0}$ and the $v_{0}$ column of $M(C,P)$ is $\vec{N}(v_{0})$. That is,
\[
M(C,P)=%
\begin{bmatrix}
0 & \mathbf{0} & \mathbf{0}\\
\mathbf{1} & I & A\\
\mathbf{0} & \mathbf{0} & B
\end{bmatrix}
\text{ and }M(C,P^{\prime})=%
\begin{bmatrix}
1 & \mathbf{0} & \mathbf{0}\\
\mathbf{0} & I & A\\
\mathbf{0} & \mathbf{0} & B
\end{bmatrix}
\]
where the first row and column correspond to $v_{0}$, $I$ is an identity
matrix involving the rows and columns corresponding to neighbors of $v_{0}$ in
$\mathcal{I}(C)$, and $B$ is a square matrix involving the rows and columns
corresponding to vertices $v\neq v_{0}$ that are not neighbors of $v_{0}$ in
$\mathcal{I}(C)$.

Notice that $M(C,P^{\prime})$ is row equivalent to
\[%
\begin{bmatrix}
1 & \mathbf{0} & \mathbf{0}\\
\mathbf{1} & I & A\\
\mathbf{0} & \mathbf{0} & B
\end{bmatrix}
,
\]
which differs from $M(C,P)$ only in one entry. Consequently the ranks of
$M(C,P^{\prime})$ and $M(C,P)$ do not differ by more than 1.

Considering the $v_{0}$ row of $M(C,P^{\prime})$, we see that every element of
$\ker M(C,P^{\prime})$ must have its $v_{0}$ coordinate equal to 0; clearly
then $\ker M(C,P^{\prime})\subseteq\ker M(C,P)$. Note also that $core(\gamma
_{0})=\vec{v}_{0}+\vec{N}(v_{0})\not \in \ker M(C,P^{\prime})$ and
$core(\gamma_{0})\in\ker M(C,P)$. The ranks of $M(C,P)$ and $M(C,P^{\prime})$
do not differ by more than 1, so
\[
\ker M(C,P)=\ker M(C,P^{\prime})+\left[  core(\gamma_{0})\right]
\]
where $\left[  core(\gamma_{0})\right]  $ denotes the one-dimensional subspace
spanned by $core(\gamma_{0})$.

As $\left\vert P^{\prime}\right\vert =\left\vert P\right\vert -1$, we may
assume inductively that $\ker M(C,P^{\prime})=core(P^{\prime})$. The core
vectors of the circuits of $P^{\prime}$ coincide with the core vectors of the
circuits of $P$, except for the fact that the core vector of the circuit
obtained by uniting $\gamma_{0}$ and $\gamma_{1}$ is $core(\gamma
_{0})+core(\gamma_{1})$. Consequently $\ker M(C,P)=\ker M(C,P^{\prime
})+\left[  core(\gamma_{0})\right]  $ is spanned by the core vectors of the
circuits of $P$ other than $\gamma_{0}$ and $\gamma_{1}$, together with
$core(\gamma_{0})+core(\gamma_{1})$ and $core(\gamma_{0})$. It follows that
$\ker M(C,P)=core(P)$.
\end{proof}

Theorem \ref{coreker} yields a useful formula with an interesting history; we
call it the \emph{circuit-nullity formula} \cite{Tbn}. Many special cases and
different versions of the formula have been discovered independently during
the last 100 years \cite{Be, BM, Bu, Br, CL, J1, Jo, KR, Lau, MP, Me, M, R,
So, S, Z}. In the notation of Theorem \ref{coreker}, the formula is%
\begin{equation}
\dim\ker M(C,P)=\left\vert P\right\vert -c(F). \tag{2.1}\label{circnull}%
\end{equation}

The proof is simple: Theorem \ref{coreker} implies that $\dim\ker M(C,P)=\dim
core(P)$, and Lemma \ref{coredim} implies that $\dim core(P)=|P|-c(F)$.

\begin{lemma}
\label{coredim}Let $P$ be a circuit partition of a 4-regular graph $F$, and
suppose $S\subseteq P$. Then the core vectors of elements of $S$ are linearly
independent if and only if there is no connected component of $F$ for which
$S$ contains all the incident circuits of $P$.
\end{lemma}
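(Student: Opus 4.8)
The plan is to prove both directions by contraposition, resting on one structural fact: for each connected component $K$ of $F$, the core vectors of the circuits of $P$ contained in $K$ sum to $\mathbf{0}$. This is immediate from the local picture at a vertex. At any vertex $v$ the transition used by $P$ partitions the four incident half-edges into two pairs, and the circuit(s) of $P$ carrying those pairs are either a single circuit doubly incident at $v$, or two distinct circuits each singly incident at $v$; in either case the number of circuits of $P$ singly incident at $v$ is even, so the $v$-coordinate of the sum vanishes. A circuit contained in $K$ is singly incident only at vertices of $K$, so core vectors coming from distinct components are supported on disjoint sets of coordinates. Write $P_K$ for the set of circuits of $P$ contained in $K$; since $F$ is $4$-regular every component contains an edge, so $P_K \neq \emptyset$.

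One direction is now immediate. Suppose $S$ contains every circuit of $P$ incident on some component $K$, that is, $P_K \subseteq S$. Then $\sum_{\gamma \in P_K} core(\gamma) = \mathbf{0}$ is a vanishing $GF(2)$-linear combination of core vectors of elements of $S$ with all coefficients equal to $1$; since $P_K$ is nonempty this combination is nontrivial, so the core vectors of the elements of $S$ are linearly dependent.

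For the converse, suppose the core vectors of the elements of $S$ are linearly dependent, say $\sum_{\gamma \in T} core(\gamma) = \mathbf{0}$ with $\emptyset \neq T \subseteq S$. Since core vectors from different components have disjoint supports, restricting this relation to the coordinates indexed by $V(K)$ yields $\sum_{\gamma \in T \cap P_K} core(\gamma) = \mathbf{0}$ for each component $K$. I claim this forces $T \cap P_K$ to equal $\emptyset$ or $P_K$. Granting the claim, $T$ is a union of sets of the form $P_K$, and since $T$ is nonempty we get $P_K \subseteq T \subseteq S$ for some $K$; then $K$ is a component all of whose $P$-circuits lie in $S$, as required.

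It remains to prove the claim, which is the main obstacle. Fix $K$, put $U = T \cap P_K$, and suppose $\emptyset \neq U \neq P_K$. Let $E_U$ be the set of edges of $K$ lying on circuits in $U$ and $E_W$ its complement in $E(K)$, the set of edges lying on circuits in $P_K \setminus U$; both are nonempty. The same half-edge bookkeeping as above shows that the vanishing of $\sum_{\gamma \in U} core(\gamma)$ at a vertex $v \in V(K)$ forces the two circuits of $P$ meeting $v$ to have the same membership in $U$ (this is automatic when a single circuit of $P$ is doubly incident at $v$), and hence every edge incident on $v$ lies in $E_U$, or every edge incident on $v$ lies in $E_W$. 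Thus no edge of $K$ joins an endpoint of an $E_U$-edge to an endpoint of an $E_W$-edge, contradicting the connectedness of $K$. The delicate point is precisely this conversion of the identity $\sum_{\gamma \in U} core(\gamma) = \mathbf{0}$ into the statement that $E_U$ is a union of connected components of $K$, which requires care with loops, parallel edges, and the distinction between single and double incidence at a vertex; everything else is routine bookkeeping in $GF(2)$.
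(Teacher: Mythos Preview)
Your proof is correct, and it takes a genuinely different route from the paper's.

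The paper proves the nontrivial direction by induction on $|P|$: given a nonempty $T\subseteq S$, if no two circuits of $T$ meet at a vertex then their core vectors are pairwise orthogonal (and nonzero, since no circuit of $T$ is an Euler circuit of a component), so the sum is nonzero; otherwise two circuits of $T$ meet at some vertex and are united there (the operation of Figure~\ref{lalgint4}) to produce $P'$ with $|P'|<|P|$ and a corresponding $T'$ with the same core-vector sum, and induction finishes.

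Your argument is instead a direct connectedness argument inside a single component: you show that if a partial sum $\sum_{\gamma\in U}core(\gamma)$ vanishes for $U\subseteq P_K$, then the edges of $K$ split into ``$U$-edges'' and ``non-$U$-edges'' in such a way that every vertex sees only one type, forcing $U=\emptyset$ or $U=P_K$ by connectedness. This avoids induction entirely and in fact establishes the slightly sharper statement that the \emph{only} $GF(2)$-relations among the core vectors are the obvious component-sum relations $\sum_{\gamma\in P_K}core(\gamma)=\mathbf{0}$. The paper's approach has the advantage of reusing the uniting construction already needed elsewhere, while yours is self-contained and more structural.
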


\begin{proof}
If $S$ includes all the circuits of $P$ incident on some connected component
of $F$ then the sum of the corresponding core vectors is $\mathbf{0}$, so the
set of core vectors of elements of $S$ is dependent.

Suppose $S$ does not include all the circuits of $P$ incident on any connected
component of $F$. In order to prove that the core vectors of the elements of
$S$ are linearly independent, we must show that for every nonempty subset $T$
of $S$, the sum of the core vectors of the elements of $T$ is nonzero. As
$T\subseteq S$, no element of $T$ is an Euler circuit for the corresponding
connected component of $F$, so the core vectors of the elements of $T$ are all
nonzero. If there is no vertex of $F$ at which two different circuits of $T$
are incident, then the core vectors of the elements of $T$ are all orthogonal
to each other, so their sum is certainly nonzero.

Suppose instead that $F$ has a vertex $v$ at which two different circuits of
$T$ are incident. Choose orientations for these two circuits, and let
$P^{\prime}$ be the circuit partition obtained from $P$ by using the operation
pictured in Figure \ref{lalgint4} at $v$; then $T$ gives rise to a
corresponding subset $T^{\prime}\subset P^{\prime}$. The operation unites two
circuits $\gamma_{1}$, $\gamma_{2}\in T$ into a single circuit $\gamma\in
T^{\prime}$ with $core(\gamma)=core(\gamma_{1})+core(\gamma_{2})$; the other
circuits of $T$ and $T^{\prime}$ are the same, so the sum of the core vectors
of the elements of $T$ is the same as the sum of the core vectors of the
elements of $T^{\prime}$. As $\left\vert P^{\prime}\right\vert <\left\vert
P\right\vert $, we may presume by induction that the sum of the core vectors
of the elements of $T^{\prime}$ is nonzero.
\end{proof}

\section{Discussion}

As mentioned in the introduction, the theory outlined in\ Section 2 includes
modified versions of ideas that have been known for decades. It seems that
interlacement first appeared in Brahana's version of the circuit-nullity
formula \cite{Br}, but the notion did not achieve broad recognition until the
1970s when interlacement was rediscovered in two areas of combinatorics:
Bouchet \cite{Bold} introduced the \emph{alternance} graph of an Euler circuit
of a 4-regular graph, and Cohn and Lempel \cite{CL} used \emph{link relation
matrices} to state a special case of the circuit-nullity formula in the
context of the theory of permutations. Shortly thereafter, Rosenstiehl and
Read \cite{RR} coined the term \emph{interlacement}, and used the technique to
analyze the problem of identifying the double occurrence words that correspond
to plane curves in general position (i.e., the only singularities are double
points). Since then, interlacement has given rise to the theory of circle
graphs \cite{Bec, Bu, Bco, F, Gh} and the more general theory of graph
equivalence under simple local complementation (see for instance Bouchet's
work on isotropic systems \cite{Bi1, Bi2}), and these ideas have inspired the
work of Arratia, Bollob\'{a}s and Sorkin on the interlace polynomials of
graphs \cite{A1, A2, A}. Also, as noted above several special cases and
different versions of the circuit-nullity formula have appeared in the
literature of combinatorics and low-dimensional topology during the last
thirty years, for instance in the work of Mellor \cite{Me}, Soboleva \cite{So}
and Zulli \cite{Z} on polynomial invariants of knots and links.

Only symmetric matrices appear in the references mentioned above. The same is
true of our earlier work \cite{T5}; the \emph{relative interlacement matrices}
we considered there are defined using a symmetric form of Definition
\ref{modint}, in which part 1 is replaced by the stipulation that the
off-diagonal entries of the rows and columns corresponding to $\phi$ vertices
are changed to 0. (The resulting matrix has the same $GF(2)$-nullity as
$M(C,P)$, so the circuit-nullity formula is valid under either definition.)
Corollary 20 of \cite{T5} states that under very particular circumstances,
there is a multiplicative relationship between the relative interlacement
matrices of a circuit partition with respect to two Euler systems. We
developed the modified interlacement machinery hoping to extend this
multiplicative naturality to arbitrary circuit partitions and Euler systems,
as in Corollary \ref{bigcor}. We are grateful to R. Brijder for pointing out
that matrices like the modified interlacement matrices appear in the
discussion of interlace polynomials given by Aigner and van der Holst
\cite{AH}; their Theorem 4 includes an implicit form of the circuit-nullity
formula (\ref{circnull}), but none of the other results we have presented
appear there.

We should also note that versions of Corollary \ref{invcor} have appeared in
the earlier literature: Jaeger \cite{J1} proved the special case in which the
Euler systems do not involve the same transition at any vertex, Bouchet
\cite{Bu} provided a different proof of the even more special case in which
the Euler systems involve only each others' $\chi$ transitions, and a general
result for relative interlacement matrices appeared in \cite{T5}. Without
naturality, though, the proofs of these results are considerably more
intricate than that of Corollary \ref{invcor}.

\bigskip

\end{document}